\newcommand{\intd}{\text{d}}
\theoremstyle{plain}
\newtheorem{theorem}{Theorem}[section]
\newtheorem{lemma}[theorem]{Lemma}
\theoremstyle{definition}
\newtheorem{definition}[theorem]{Definition}
\newtheorem{example}[theorem]{Example}
\newtheorem{question}[theorem]{Question}
\newtheorem{remark}[theorem]{Remark}
\title{A note on the Wiman-Valiron inequality}
\author{Karl-G. Grosse-Erdmann}
\address{Karl-G. Grosse-Erdmann, 
D\'epartement de Math\'ematique, Universit\'e de Mons, 20 Place du Parc, 7000 Mons, Belgium}
\email{kg.grosse-erdmann@umons.ac.be}
\thanks{The author is supported by the Fonds de la Recherche Scientifique - FNRS under Grant n\textsuperscript{o} CDR J.0078.21. I am very grateful to Kevin Agneessens for many stimulating discussions.}
\keywords{Wiman-Valiron theory, Wiman-Valiron inequality}
\subjclass[2020]{30B10, 30D20}
\begin{document}

\begin{abstract}
The Wiman-Valiron inequality relates the maximum modulus of an analytic function to its Taylor coefficients via the maximum term. After a short overview of the known results, we obtain a general version of this inequality that seems to have been overlooked in the literature so far. We end the paper with an open problem.
\end{abstract}

\maketitle

\section{The Wiman-Valiron inequality}\label{s-wv}

For an analytic function $f(z)=\sum_{n=0}^\infty a_n z^n$ on an open disk $\{z\in\mathbb{C} : |z|<R\}$, $0<R\leq \infty$, in the complex plane, its maximum modulus and its maximum term
are defined by
\[
M_f(r) = \max_{|z|=r}|f(z)| \ \text{ and } \ \mu_f(r) = \max_{n\geq 0}|a_n|r^n, \ 0\leq r<R,
\]
respectively. By the Cauchy estimates one has that
\[
\mu_f(r) \leq M_f(r), \ 0\leq r<R.
\]
The theory initiated by Wiman and Valiron at the beginning of the last century seeks, among other things, to obtain an upper estimate of $M_f$ via $\mu_f$. For an introduction to the Wiman-Valiron theory for entire functions we refer to Hayman \cite{Hay74}, \cite[Section 6.5]{Hay89}.

\subsection{Wiman-Valiron theory on the plane} The classical Wiman-Valiron inequality says that if $f$ is a non-constant entire function then, for any $\delta>0$, there is a (measurable) set $E\subset [0,\infty)$ of finite logarithmic measure and some $C>0$ such that
\begin{equation}\label{eq-wv}
M_f(r)\leq C \mu_f(r)\big(\log \mu_f(r)\big)^{\frac{1}{2}+\delta}, \ r\notin E.
\end{equation}
Here, $E$ is said to be of finite logarithmic measure if
\[
\int_{E\cap [1,\infty)} \frac{1}{r}\intd r<\infty.
\]
More precisely, Wiman \cite{Wim14} had shown that inequality \eqref{eq-wv} holds for an unbounded sequence of radii $(r_n)_n$, while Valiron \cite{Val18}, \cite{Val20}, \cite[p.\ 106]{Val23} obtained the inequality outside the stated exceptional set $E$.

In 1962, Rosenbloom \cite{Ros62} proposed a new proof that uses, in a clever way, a probabilistic argument. In this way he arrived at the following stronger result, where he considers positive increasing functions $\psi_1, \psi_2$ on some interval $[a,\infty)$, $a\geq 0$, so that, for some $b\geq a$,
\[
\int_b^\infty\frac{1}{\psi_k(y)}\intd y<\infty, \ k=1,2,
\]
and so that $\varphi(y):=\psi_2(\psi_1(y))$ is defined for $y\geq b$.

\begin{theorem}[Rosenbloom]\label{t-rosen}
Let $f$ be a non-constant entire function. Then there is a set $E\subset [0,\infty)$ of finite logarithmic measure and some $C>0$ such that
\[
M_f(r)\leq C \mu_f(r)\sqrt{\varphi\big(\log M_f(r)\big)}, \ r\notin E.
\]
\end{theorem}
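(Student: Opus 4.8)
The plan is to bound $M_f(r)$ by the full Taylor sum $S(r):=\sum_{n\ge 0}|a_n|r^n\ge M_f(r)$ and to show that the ratio $S(r)/\mu_f(r)$ is small outside a thin set. Writing $c_n=|a_n|r^n$, I regard $p_n:=c_n/S(r)$ as a probability distribution on $\mathbb{Z}_{\ge 0}$ whose largest atom is $p_{\max}=\mu_f(r)/S(r)$. The probabilistic heart of the argument (Rosenbloom's idea) is the elementary fact that a lattice distribution with maximal atom $p_{\max}$ must be spread out: since $\mathbb{P}(|X-\mu|\le L)\le(2L+1)p_{\max}$, at least half the mass sits at distance $\gtrsim p_{\max}^{-1}$ from the mean, so the variance is bounded below by $c\,p_{\max}^{-2}-c'$ for absolute constants. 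Equivalently,
\[
\frac{S(r)}{\mu_f(r)}\le C\sqrt{v(r)+1},\qquad v(r):=\operatorname{Var}(p).
\]

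Writing $D=\frac{d}{d\log r}$ and using $Dc_n=nc_n$, the mean and variance are $m(r)=\sum n p_n=D\log S(r)$ and $v(r)=\sum n^2 p_n-m^2=D^2\log S(r)=Dm(r)$. As $S$ is a sum of the log-convex functions $c_n$, the function $\log S$ is convex in $\log r$, so $m$ is nondecreasing and $v=Dm\ge 0$. I then apply the Borel--Nevanlinna growth lemma twice. First, to the nondecreasing $m$ with weight $\psi_2$: $v=Dm\le\psi_2(m)$ for $\log r$ outside a set $E_2$ of finite measure. Second, to the increasing $\log S$ with weight $\psi_1$: $m=D\log S\le\psi_1(\log S)$ outside a set $E_1$ of finite measure. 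Since $\psi_2$ is increasing, combining these gives $v\le\psi_2(\psi_1(\log S))=\varphi(\log S)$ off $E_1\cup E_2$, a set of finite logarithmic measure, whence
\[
M_f(r)\le S(r)\le C\,\mu_f(r)\sqrt{\varphi(\log S(r))+1},\qquad r\notin E_1\cup E_2.
\]

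The remaining, and main, obstacle is to replace $\log S$ by $\log M_f$ on the right. Here one exploits $\mu_f\le M_f\le S$ together with the slow growth just established: from $S/\mu_f\le C\sqrt{\varphi(\log S)+1}$ one gets $\log S=\log M_f+\log(S/\mu_f)+\log(\mu_f/M_f)\le\log M_f+\tfrac12\log\varphi(\log S)+O(1)$, and since $\log\varphi(y)=o(y)$ the quantity $\log S$ is asymptotic to $\log M_f$. The cleanest way to make this exact, avoiding the factor lost in passing through $\log S$, is to run the two growth-lemma steps at the level of $\log\mu_f$ rather than $\log S$: one bounds the spread of the coefficient distribution by the central index $\nu(r)=D\log\mu_f(r)$ and its derivative, applies the growth lemma to $\nu=D\log\mu_f\le\psi_1(\log\mu_f)\le\psi_1(\log M_f)$ and to $D\nu\le\psi_2(\nu)$, and uses that $\varphi$ is increasing with $\mu_f\le M_f$ to land exactly on $\sqrt{\varphi(\log M_f)}$.

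Finally, since $f$ is non-constant, $M_f(r)\to\infty$ and $\varphi(\log M_f(r))\to\infty$, so the additive constant under the square root is absorbed for large $r$, which completes the proof after adjusting $C$. The delicate points throughout are the sharp form of the variance lemma and the bookkeeping that guarantees the exceptional set retains finite logarithmic measure after both applications of the growth lemma.
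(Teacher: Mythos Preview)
Your overall strategy is correct and coincides with the paper's own argument (the paper actually proves the more general Theorem~\ref{t-main} and recovers Rosenbloom's theorem by taking $h\equiv 1$). Both proofs rest on exactly the two ingredients you name: Rosenbloom's probabilistic bound $S(r)/\mu_f(r)\le C\sqrt{v(r)+1}$, where $v=D^2\log S$ is the variance of the distribution $p_n=|a_n|r^n/S(r)$, followed by two applications of the Borel--Nevanlinna growth lemma giving $v=Dm\le\psi_2(m)$ and $m=D\log S\le\psi_1(\log S)$ off sets of finite logarithmic measure. The paper derives the variance bound via Chebyshev (a fixed fraction of the mass lies within $c\sqrt{v}$ of the mean, and that window contains $O(\sqrt{v})$ integers); your contrapositive formulation is equivalent.

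Where your write-up goes wrong is the final step, replacing $\log S$ by $\log M_f$. Your first suggestion invokes $\log\varphi(y)=o(y)$, which is not among the hypotheses (e.g.\ $\psi_1(y)=\psi_2(y)=e^y$ is admissible), and even granting it you would only reach $\varphi\big((1+o(1))\log M_f\big)$, not $\varphi(\log M_f)$. Your second suggestion --- bounding the spread via the central index $\nu=D\log\mu_f$ and $D\nu$ --- is confused: the quantity to be controlled is the variance $v=D^2\log S$, which is not governed by $\nu$ or its derivative (indeed $\nu$ is an integer-valued step function, so $D\nu=0$ almost everywhere). The paper avoids the whole issue by a reduction made \emph{before} the argument begins: replacing each $a_n$ by $|a_n|$ leaves $\mu_f$ unchanged and does not decrease $M_f$, so one may assume non-negative coefficients, in which case $M_f(r)=f(r)=S(r)$ and there is nothing to replace. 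Adopt that reduction at the outset and your proof is complete.
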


If one takes $\psi_1(y)=\psi_2(y)=y^{1+\delta}$, $\delta >0$, one arrives at the classical Wiman-Valiron inequality as a special case; see Remark \ref{r-ineq} below.

For $\psi_1(y)=\psi_2(y)=y(\log y)^{1+\delta}$, $\delta >0$, one obtains the stronger inequality
\begin{equation}\label{eq-wvB}
M_f(r)\leq C \mu_f(r)\big(\log \mu_f(r)\big)^{\frac{1}{2}}\big(\log\log \mu_f(r)\big)^{1+\delta}, \ r\notin E.
\end{equation}

More generally, for $\psi_1(y)=\psi_2(y)=y\log y\log_2y\cdots(\log_{n-1} y)^{1+\delta}$, $n\geq 2$, $\delta >0$, Rosenbloom deduced that
\begin{equation}\label{eq-wvC}
M_f(r)\leq C \mu_f(r)\big(\log \mu_f(r)\big)^{\frac{1}{2}}\log_2 \mu_f(r)\cdots\log_{n-1} \mu_f(r)\big(\log_n \mu_f(r)\big)^{1+\delta}, \ r\notin E,
\end{equation}
see also \cite[Theorem 6]{Hay74}, \cite[Theorem 6.23]{Hay89}; here, $\log_n$ denotes the $n$-fold iterated logarithm. 

Hayman \cite[p.\ 334]{Hay74} notes that inequality \eqref{eq-wvC} is optimal in the sense that $\delta$ cannot be replaced by 0.

\begin{remark}\label{r-ineq}
The derivation of inequalities like \eqref{eq-wv}, \eqref{eq-wvB} and \eqref{eq-wvC} is elementary and tedious, but maybe not completely obvious. Let us therefore perform the steps once, taking \eqref{eq-wvB} for an illustration. If $\psi_1(y)=\psi_2(y)=y(\log y)^{1+\delta}$, $\delta>0$, then we have, for large $y$,
\begin{align}
\varphi(y)=\psi_2(\psi_1(y))&=y(\log y)^{1+\delta}\big(\log(y(\log y)^{1+\delta})\big)^{1+\delta}\notag\\
&\leq y(\log y)^{1+\delta}(\log(y^2))^{1+\delta}= 2^{1+\delta} y(\log y)^{2+2\delta}\label{WV1}\\
&\leq y^2.\label{WV2}
\end{align}
Thus, by Theorem \ref{t-rosen} and \eqref{WV2}, there is a set $E\subset [0,\infty)$ of finite logarithmic measure and some $C>0$ such that, for $r\notin E$ large, 
\[
M_f(r)\leq C \mu_f(r)\sqrt{\varphi\big(\log M_f(r)\big)} \leq C \mu_f(r)\log M_f(r) \leq \mu_f(r)  M_f(r)^{\frac{1}{2}}
\]
and hence
\[
M_f(r)\leq \mu_f(r)^2.
\]
We deduce, again by Theorem \ref{t-rosen} and by \eqref{WV1}, that there is a possibly larger set $E'\subset [0,\infty)$ of finite logarithmic measure and constants $C, C'>0$ such that, for $r\notin E'$ large,
\begin{align*}
M_f(r)&\leq C \mu_f(r)\sqrt{\varphi\big(2\log \mu_f(r)\big)} \leq C' \mu_f(r)\sqrt{\log \mu_f(r)(\log\log \mu_f(r))^{2+2\delta}}\\&=C' \mu_f(r)\big(\log \mu_f(r)\big)^{\frac{1}{2}}\big(\log\log \mu_f(r)\big)^{1+\delta}. 
\end{align*}

\end{remark}

\subsection{Wiman-Valiron theory on the unit disk}
Shortly after the work of Rosenbloom, K\"ov\'ari \cite{Kov66} realised that Rosenbloom's approach can also be used to obtain a Wiman-Valiron inequality on the unit disk. However, a problem arises due to the presence of an additional factor $\frac{1}{1-r}$. In order to obtain an analogue of Rosenbloom's inequality, K\"ov\'ari was forced to allow exceptional sets $E\subset [0,1)$ of logarithmic density zero, which demands that
\[
\lim_{r\to 1}\frac{1}{\log\frac{1}{1-r}}\int_{E\cap[0,r)} \frac{1}{1-t}\intd t=0.
\]
This is more generous than demanding that $E$ be of finite logarithmic measure, which in the unit interval means that
\[
\int_{E} \frac{1}{1-r}\intd r<\infty.
\]
He also had to be more demanding on the positive increasing functions $\psi_1, \psi_2$ on $[a,\infty)$, $a\geq 0$: there is to be some $b\geq a$ so that
\[
\int_b^\infty\frac{1}{\psi_k(y)}\intd y<\infty, \text{ and }  1\leq \frac{\psi_k(y)}{y} \nearrow \infty \text{ as } b\leq y \nearrow \infty 
\]
for $k=1,2$; that
\[
\psi_2(y_1y_2) \leq K\big(y_2\psi_2(y_1)+ y_1\psi_2(y_2)\big)
\]
for some constant $K>0$ and all $y_1, y_2\geq b$; and that $\varphi(y):=\psi_2(\psi_1(y))$ is defined for $y\geq b$. K\"ov\'ari's main result is then as follows.

\begin{theorem}[K\"ov\'ari]\label{t-kov}
Let $f$ be an unbounded analytic function on the unit disk. Then there is a set $E\subset [0,1)$ of logarithmic density zero and some $C>0$ such that
\[
M_f(r)\leq C \frac{\mu_f(r)}{1-r}\sqrt{\varphi\big(\log M_f(r)\big)}, \ r\notin E.
\]
\end{theorem}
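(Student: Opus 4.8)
The plan is to adapt Rosenbloom's probabilistic method to the disk, the genuinely new ingredient being the bookkeeping of the exceptional set near $r=1$, which is a region of finite logarithmic measure. First I would reduce to nonnegative Taylor coefficients: passing from $f$ to $\sum_n|a_n|z^n$ preserves $\mu_f$, only increases the maximum modulus, and makes the latter equal to the sum of the series; so I may assume $a_n\ge0$ and $M_f(r)=\sum_na_nr^n=:F(r)$, the general case reducing to this one.

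The probabilistic core is an elementary observation. Attach to the terms the probability distribution $p_n(r)=a_nr^n/F(r)$ on $\mathbb{Z}_{\ge0}$, with mean and variance
$$m(r)=\sum_nnp_n(r)=\frac{d\log F}{d\log r},\qquad \sigma^2(r)=\sum_n\big(n-m(r)\big)^2p_n(r)=\frac{d^2\log F}{d(\log r)^2}=\frac{dm}{d\log r}.$$
Any probability distribution on $\mathbb{Z}$ all of whose atoms are $\le p^*$ must spread over $\gtrsim 1/p^*$ integers and hence has variance $\gtrsim (p^*)^{-2}$; so $\max_np_n(r)\ge c/\sigma(r)$. Since $\max_np_n(r)=\mu_f(r)/F(r)$, this gives at once, with no exceptional set,
$$M_f(r)=F(r)\le C\,\sigma(r)\,\mu_f(r).$$
It therefore suffices to bound $\sigma(r)$ by $\tfrac{1}{1-r}\sqrt{\varphi(\log M_f(r))}$ off a small set.

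To control $\sigma^2=\frac{dm}{d\log r}$ I would pass to the variable $s=\log\frac{1}{1-r}$, for which $\intd s=\frac{\intd r}{1-r}$ and $\frac{d}{d\log r}=\frac{r}{1-r}\frac{d}{ds}$. Both $m$ and $\sigma^2$ are nondecreasing (by convexity of $\log F$ in $\log r$) and tend to $\infty$ as $r\to1$: the latter because $\log M_f(r)=\int_{r_0}^rm\,\frac{\intd t}{t}\to\infty$ while $\int_{r_0}^1\frac{\intd t}{t}<\infty$, which forces $m\to\infty$. A Borel-type growth lemma in $s$ then gives, outside sets of finite $\frac{\intd r}{1-r}$-measure, first $m(r)\le\frac{r}{1-r}\psi_1(\log M_f(r))$ and then $\sigma^2(r)\le\frac{r}{1-r}\psi_2(m(r))$. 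Feeding the first estimate into the second and invoking the hypothesis $\psi_2(y_1y_2)\le K\big(y_2\psi_2(y_1)+y_1\psi_2(y_2)\big)$ together with the monotonicity of $\psi_k(y)/y$ to absorb the resulting cross term, I would obtain $\sigma^2(r)\lesssim\frac{1}{(1-r)^2}\varphi(\log M_f(r))$, whence $M_f(r)\le C\frac{\mu_f(r)}{1-r}\sqrt{\varphi(\log M_f(r))}$.

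The hard part is precisely the exceptional set, and it is here that both concessions in the hypotheses enter. The cross-term absorption above is clean only where $\frac{r}{1-r}\le\psi_1(\log M_f(r))$; on the complementary region—where $\frac{1}{1-r}$ dominates, as already for $f(z)=\frac{1}{1-z}$—the two growth-lemma bounds become lossy and must be replaced by a separate, more delicate estimate of $\sigma$ on the scale $\frac{1}{1-r}$ itself. Because this region need not have finite $\frac{\intd r}{1-r}$-measure, one cannot keep the exceptional set of finite logarithmic measure as on the plane; reconciling the two regimes yields control only outside a set of logarithmic density zero. Making this reconciliation quantitative, and verifying that the subadditivity of $\psi_2$ and the growth conditions on $\psi_1,\psi_2$ are exactly what is needed to close it, is the main obstacle.
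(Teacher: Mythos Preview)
The paper does not give its own proof of K\"ov\'ari's theorem; it is quoted as a known result from \cite{Kov66}. That said, your outline is exactly the Rosenbloom--K\"ov\'ari approach, and it matches the paper's machinery: your probabilistic step is the paper's Lemma~\ref{l-rosen}, and your two Borel-type growth estimates (for $g=\log F$ with $\psi_1$, then for $m=g'$ with $\psi_2$) are two applications of the paper's Lemma~\ref{l-standard} with $h(r)=\tfrac{1}{1-r}$. The paper's Remark~\ref{r-comp}(a) confirms your diagnosis of the hard part: K\"ov\'ari's extra hypotheses on $\psi_1,\psi_2$ and the weakening of the exceptional set from finite logarithmic measure to logarithmic density zero come precisely from the passage from $\sqrt{\tfrac{1}{1-r}\psi_2\bigl(\tfrac{1}{1-r}\psi_1(\log M_f(r))\bigr)}$ to $\tfrac{1}{1-r}\sqrt{\varphi(\log M_f(r))}$, which is the cross-term absorption and region-splitting you describe in your last paragraph.

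One small correction to the probabilistic step: your heuristic ``all atoms $\le p^*$ forces variance $\gtrsim (p^*)^{-2}$'' is false in general (take $\mathbb{P}(X=0)=1-\varepsilon$, $\mathbb{P}(X=1)=\varepsilon$ with $\varepsilon\to 0$). The correct argument is Chebyshev, as in the paper's proof of Lemma~\ref{l-rosen}: at least a fixed proportion of the mass lies within $O(\sigma)$ of the mean, hence in at most $O(\sigma)+1$ integers, giving $F\le C\mu_f\,(\sigma+1)$. The ``$+1$'' is harmless here since the right-hand side of the target inequality already dominates $\mu_f$.
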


Since the functions $\psi_1(y)=y^{1+\delta}$, $\delta >0$, and $\psi_2(y)=y(\log y)^2$ satisfy the hypotheses of K\"ov\'ari's theorem, one obtains the inequality
\begin{equation}\label{eq-wvD}
M_f(r)\leq C \frac{\mu_f(r)}{1-r}\Big(\log \frac{\mu_f(r)}{1-r}\Big)^{\frac{1}{2}+\delta}, \ r\notin E.
\end{equation}

In the same way, for $\psi_1(y)=\psi_2(y)=y\log y\log_2y\cdots(\log_{n-1} y)^{1+\delta}$, $n\geq 2$, $\delta >0$, one deduces the stronger inequality
\begin{equation}\label{eq-wvD2}
M_f(r)\leq C \frac{\mu_f(r)}{1-r}\Big(\log \frac{\mu_f(r)}{1-r}\Big)^{\frac{1}{2}}\log_2 \frac{\mu_f(r)}{1-r}\cdots\log_{n-1} \frac{\mu_f(r)}{1-r}\Big(\log_n \frac{\mu_f(r)}{1-r}\Big)^{1+\delta}, \ r\notin E.
\end{equation}

In 1980, Sule\u{\i}manov \cite{Sul80} states a result that holds with exceptional sets of finite logarithmic measure. For this he considers a positive differentiable function $\varphi$ on $[a,\infty)$, $a>0$, such that
\[
\int_a^\infty \Big(\int_a^y \varphi(t)\intd t\Big)^{-\frac{1}{2}} \intd y<\infty \ \text{ and } y\varphi'(y)\leq K\varphi(y), \ y\geq a,
\]
for some constant $K>0$. He does not give a proof, but it appears that he was also building on ideas of Rosenbloom.

\begin{theorem}[Sule\u{\i}manov]\label{t-sou}
Let $f$ be an unbounded analytic function on the unit disk. Then there is a set $E\subset [0,1)$ of finite logarithmic measure and some $C>0$ such that
\[
M_f(r)\leq C \frac{\mu_f(r)}{(1-r)^{\frac{3}{4}}}\sqrt{\varphi\Big(\frac{1}{(1-r)^{\frac{1}{2}}}\log M_f(r)\Big)}, \ r\notin E.
\]
\end{theorem}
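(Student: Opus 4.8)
The plan is to run Rosenbloom's probabilistic argument in the form used by K\"ov\'ari, but to replace the concluding Borel--Nevanlinna step by a \emph{second-order} version matched to the weight $\frac{1}{1-r}$; the two hypotheses on $\varphi$ are exactly what make this second-order step converge while keeping the exceptional set of finite logarithmic measure. \emph{Step 1 (the probabilistic core).} I would write $S(r)=\sum_{n\ge0}|a_n|r^n$, so that $M_f(r)\le S(r)$, and regard $p_n=|a_n|r^n/S(r)$ as a probability distribution on the indices, with mean $m(r)=\frac{\intd \log S}{\intd\log r}$ and variance $V(r)=\frac{\intd^2\log S}{\intd(\log r)^2}\ge 0$; thus $T(x):=\log S(e^x)$ is convex and increasing in $x=\log r$. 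By Chebyshev's inequality at least half the mass lies in $[m-\sqrt{2V},m+\sqrt{2V}]$, an interval carrying at most $2\sqrt{2V}+1$ integers, so $\mu_f(r)/S(r)=\max_n p_n\ge c/\sqrt{1+V(r)}$ and hence
\[
M_f(r)\le S(r)\le C\,\mu_f(r)\,\sqrt{1+V(r)}.
\]
Since $\mu_f\le M_f\le S\le C\mu_f\sqrt{1+V}$, the quantities $\log\mu_f$, $\log M_f$ and $T=\log S$ differ only by $\tfrac12\log(1+V)+O(1)$, of lower order, which lets me interchange them freely inside $\varphi$ at the end by means of the doubling bound $\varphi(Cy)\le C^K\varphi(y)$ coming from $y\varphi'(y)\le K\varphi(y)$.

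\emph{Step 2 (reduction to a weighted Borel estimate).} By Step 1 it suffices to show that, outside a set $E$ of finite logarithmic measure,
\[
V(r)\le \frac{C}{(1-r)^{3/2}}\,\varphi\!\Big(\tfrac{1}{(1-r)^{1/2}}\log M_f(r)\Big),
\]
for then $M_f\le C\mu_f\sqrt{1+V}$ yields the stated inequality with the external exponent $\tfrac34=\tfrac12\cdot\tfrac32$ and the internal factor $(1-r)^{-1/2}$. The natural variable is $s=\log\frac{1}{1-r}$, for which $\intd s=\frac{\intd r}{1-r}$, so that $E$ has finite logarithmic measure precisely when it has finite Lebesgue measure in $s$. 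The awkwardness is that $T$ is convex in $x=\log r$ while the exceptional set must be small in $s$, the two variables being linked by the degenerate Jacobian $\intd x=\frac{1-r}{r}\,\intd s$, which tends to $0$ as $r\to1$.

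\emph{Step 3 (the second-order Borel lemma).} I would bound $V=T''$ by iterating Borel's lemma twice on the convex function $T$: first $T'(x)\le \psi_1(T(x))$ off a small set, and then, $T'$ being increasing, $T''(x)\le\psi_2(T'(x))\le\varphi(T(x))$ off a further small set, where $\varphi=\psi_2\circ\psi_1$ and each $\psi_k$ satisfies $\int^\infty \intd y/\psi_k<\infty$. The merged condition on the single function $\varphi$ is exactly $\int_a^\infty\big(\int_a^y\varphi\big)^{-1/2}\intd y<\infty$: this is the finite-time blow-up criterion for the autonomous equation $u''=\varphi(u)$ (multiply by $u'$ to obtain $(u')^2=2\int\varphi(u)$ and separate variables), the exact analogue, one order up, of the classical $\int^\infty\intd y/\psi<\infty$. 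The regularity hypothesis $y\varphi'\le K\varphi$ is what permits passing between $T$, $\log M_f$ and their rescalings by powers of $1-r$ without spoiling the estimate.

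The main obstacle, and the whole reason Sule\u{\i}manov's result improves on K\"ov\'ari's, is carrying Steps 2--3 out in the $s$-variable: there is no a priori lower bound on the growth of $\log\mu_f(r)$ as $r\to1$, so the weight $\frac{1}{1-r}$ in $\intd s=\frac{\intd r}{1-r}$ cannot simply be absorbed into a function of $T$, and the naive two-step argument controls only the $x$-measure of the bad set, not its $s$-measure. The device that resolves this is precisely the internal factor $(1-r)^{-1/2}$: replacing the argument $\log M_f$ of $\varphi$ by $(1-r)^{-1/2}\log M_f$ couples the radial and growth variables so that I expect the weighted Borel integral to collapse to $\int_a^\infty(\int_a^y\varphi)^{-1/2}\,\intd y<\infty$, while tracking the Jacobian $\frac{1-r}{r}$ through the square root in $M_f\le C\mu_f\sqrt{1+V}$ produces the external exponent $\tfrac34$. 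Checking that these powers fit together so that $E$ is genuinely of finite logarithmic measure is the delicate point; the remainder is the routine, if tedious, Wiman--Valiron bookkeeping.
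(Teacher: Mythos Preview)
The paper does not actually prove Theorem~\ref{t-sou}: it is quoted from Sule\u{\i}manov's announcement \cite{Sul80}, and the paper explicitly notes that ``he does not give a proof.'' So there is no proof in the paper to compare against. The paper's own machinery (Lemmas~\ref{l-rosen} and~\ref{l-standard}, leading to Theorem~\ref{t-main}) handles the weight $h(r)=\tfrac{1}{1-r}$ by two \emph{separate} first-order Borel steps, each absorbing one full factor of $h$; this produces the form $\sqrt{h\,\psi_2(h\,\psi_1(\log M_f))}$ rather than Sule\u{\i}manov's $\tfrac{1}{(1-r)^{3/4}}\sqrt{\varphi((1-r)^{-1/2}\log M_f)}$, and the two shapes are not obviously interderivable for a general $\varphi$ satisfying Sule\u{\i}manov's hypotheses.

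As for your proposal itself: Step~1 is exactly Lemma~\ref{l-rosen}, and your reading of the condition $\int_a^\infty(\int_a^y\varphi)^{-1/2}\,\intd y<\infty$ as the finite-time blow-up criterion for $u''=\varphi(u)$ is the right intuition. But the proof is not there. The entire content of Sule\u{\i}manov's theorem lies in the weighted second-order Borel estimate you call Step~3, and you do not carry it out: you write ``I expect the weighted Borel integral to collapse'' and concede that ``checking that these powers fit together so that $E$ is genuinely of finite logarithmic measure is the delicate point.'' That \emph{is} the theorem. Concretely, you need to show that the set where $g''(x)\geq (1-r)^{-3/2}\varphi\big((1-r)^{-1/2}g(x)\big)$ has finite $\tfrac{1}{1-r}\,\intd x$-measure, and nothing in your outline explains how the two exponents $\tfrac{3}{2}$ and $\tfrac{1}{2}$ conspire with the hypothesis $\int(\int\varphi)^{-1/2}<\infty$ (and the doubling condition $y\varphi'\leq K\varphi$) to make that integral converge. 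Until that computation is written down, this is a plan rather than a proof.
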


In the special case when $\varphi(y)=y^{1+\delta}$, $\delta>0$, he obtains the inequality
\begin{equation}\label{eq-sul}
M_f(r)\leq C \frac{\mu_f(r)}{(1-r)^{1+\delta}}\Big(\log \frac{\mu_f(r)}{1-r}\Big)^{\frac{1}{2}+\delta}, \ r\notin E.
\end{equation}

He mentions that one may apply the result also with $\varphi(y)=y(\log y)^2(\log_2y)^2\cdots(\log_{n-1} y)^{2+\delta}$, $n\geq 2$, $\delta >0$. For such a choice one is led to the inequality
\begin{equation}\label{eq-sul2}
M_f(r)\leq C \frac{\mu_f(r)}{1-r}\Big(\log \frac{1}{1-r}\Big)^{1+\delta}\Big(\log \frac{\mu_f(r)}{1-r}\Big)^{\frac{1}{2}}\log_2\frac{\mu_f(r)}{1-r}\cdots\log_{n-1} \frac{\mu_f(r)}{1-r}\Big(\log_n \frac{\mu_f(r)}{1-r}\Big)^{1+\delta}
\end{equation}
for $r\notin E$.

More recently, Fenton and Strumia \cite{FeSt09} returned to the classical approach of Wiman and Valiron. As for their exceptional set $E$, they want it to be of final density zero, which means that
\[
\lim_{r\to 1} \frac{\lambda(E\cap (r,1))}{1-r}=0,
\]
where $\lambda$ denotes Lebesgue measure. Each such set is of logarithmic density zero, see \cite[pp.\ 479--480]{FeSt09}, while it is easy to see that each set of finite logarithmic measure is of final density zero.

They also consider a positive increasing and piecewise continuously differentiable function $\varphi$ on some $[a,\infty)$, $a\geq 0$, so that
\[
\int_a^\infty \frac{1}{y\varphi(y)}\intd y<\infty \ \text{ and } \ \lim_{y\to\infty}\frac{1}{\varphi(y)}\varphi\Big(\frac{y}{\varphi(y)}\Big) =1.
\]

\begin{theorem}[Fenton, Strumia]\label{t-fest}
Let $f$ be an analytic function on the unit disk with unbounded sequence of Taylor coefficients. Then there is a set $E\subset [0,1)$ of final density zero and some $C>0$ such that
\[
M_f(r)\leq C \frac{\mu_f(r)}{1-r}\sqrt{\log \mu_f(r)} \varphi\Big(\frac{\log \mu_f(r)}{1-r}\Big), \ r\notin E.
\]
\end{theorem}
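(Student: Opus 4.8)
The plan is to follow the classical Wiman--Valiron route that Fenton and Strumia revisit: first produce a purely local estimate bounding $M_f(r)$ by $\mu_f(r)$ times the ``effective number of Taylor terms'' comparable to the maximum term, and then control that number, for $r$ outside a small set, by a growth lemma of Borel--Nevanlinna type tuned to $\varphi$. To set up, I would introduce the central index $\nu_f(r)=\max\{n\ge 0:|a_n|r^n=\mu_f(r)\}$ and record that $U(x):=\log\mu_f(e^x)$ is a convex, increasing, piecewise linear function of $x=\log r$ whose right derivative is $\nu_f(e^x)$; equivalently $\log\mu_f(r)=\log\mu_f(r_0)+\int_{r_0}^r\frac{\nu_f(t)}{t}\intd t$. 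The starting point is the trivial majorant $M_f(r)\le\sum_{n\ge 0}|a_n|r^n=\mu_f(r)\sum_{n\ge0}e^{-\Delta_n}$, where $\Delta_n\ge 0$ is the gap between $\log\mu_f(r)$ and $\log(|a_n|r^n)$.

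The core local estimate is the heart of the argument. Writing each gap $\Delta_n$ as the defect of the linear function $n\log r+\log|a_n|$ below the convex envelope $U$, the convexity of $U$ forces the terms to decay away from the central index at a Gaussian rate governed by the local rate of increase of $\nu_f$. Summing the resulting bound yields, for each fixed $r$,
\[
M_f(r)\le C\,\mu_f(r)\,\sqrt{N(r)},
\]
where $N(r)$ measures how fast $\nu_f$ increases near $r$ (morally $N(r)=r\,\nu_f'(r)$, the ``second derivative'' of $U$); on the plane this quantity is comparable to $\log\mu_f(r)$, which already recovers the classical exponent $\tfrac12$. The delicate point is that $U$ is merely convex, so $N(r)$ must be read as an increment of $\nu_f$ over a suitably chosen short radius interval rather than a genuine second derivative, and the comparison radius must be chosen to balance the central block of terms against the geometric tail.

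It then remains to bound $N(r)$ off the exceptional set. Here I would apply a Borel--Nevanlinna growth lemma to the increasing function $\nu_f$ on $[r_0,1)$: using $\int_a^\infty\frac{1}{y\varphi(y)}\intd y<\infty$ one shows that the increments of $\nu_f$ satisfy $N(r)\le\frac{C}{(1-r)^2}\log\mu_f(r)\,\varphi^2\!\big(\tfrac{\log\mu_f(r)}{1-r}\big)$ for all $r$ outside a set $E\subset[0,1)$, and the convergence of the same integral identifies $E$ as a set of final density zero. One checks that the powers of $(1-r)$ match by writing $\log\mu_f$ as the integral of $\nu_f/t$, so that $N(r)$ and the claimed majorant carry the same order of blow-up as $r\to 1$. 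Feeding this into the core estimate gives exactly
\[
M_f(r)\le C\,\frac{\mu_f(r)}{1-r}\sqrt{\log\mu_f(r)}\;\varphi\!\Big(\frac{\log\mu_f(r)}{1-r}\Big),\qquad r\notin E.
\]

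I expect two obstacles. First, the exponents of $(1-r)$ must be tracked exactly through the change of variables $1-r\sim-\log r$, since near the boundary $\nu_f$ may blow up arbitrarily fast and the loss of even a single power of $(1-r)$ would be fatal; this is the feature that distinguishes the disk from the plane. Second, the growth lemma is self-referential --- the bound on $N(r)$ involves $\log\mu_f$ on both sides through $\varphi$ --- and closing this loop, together with reconciling the a priori appearance of $\log M_f$ with the $\log\mu_f$ desired inside $\varphi$, is precisely what the regularity hypothesis $\lim_{y\to\infty}\varphi(y/\varphi(y))/\varphi(y)=1$ is designed to handle. Verifying that this condition is strong enough to absorb the correction while keeping $E$ of final density zero is, I expect, the crux of the proof.
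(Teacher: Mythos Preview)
The paper does not prove this theorem at all: Theorem~\ref{t-fest} is stated as a result of Fenton and Strumia with a citation to \cite{FeSt09}, and no argument is given or sketched. There is therefore no ``paper's own proof'' to compare your proposal against.

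That said, a brief remark on methodology is in order. Your outline follows the classical Wiman--Valiron route (convexity of $\log\mu_f$, central index, Gaussian-type decay of the terms away from $\nu_f(r)$, Borel--Nevanlinna growth lemma), which is indeed the approach Fenton and Strumia adopt in \cite{FeSt09}. By contrast, the present paper's own proofs---for its main Theorem~\ref{t-main}---go through Rosenbloom's probabilistic device (Lemma~\ref{l-rosen}): one treats $(|a_n|r^n/F(r))_n$ as a probability distribution, reads off that its variance equals $g''(x)$ with $g(x)=\log f(e^x)$, and obtains $M_f(r)\le C\mu_f(r)\sqrt{g''(x)}$ directly from Chebyshev's inequality, after which a single growth lemma (Lemma~\ref{l-standard}) bounds $g''$ in terms of $g'$ and $g'$ in terms of $g$. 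This replaces your ``core local estimate via Gaussian decay'' by a one-line probabilistic argument and avoids any discussion of comparison radii or increments of $\nu_f$. So while your sketch is a reasonable plan for reproducing the Fenton--Strumia proof, it is not the methodology this paper develops.

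One caution on your sketch itself: the claim that on the plane $N(r)$ is ``comparable to $\log\mu_f(r)$'' is not correct as stated; what happens is that a growth lemma bounds $N(r)$ by $\psi_2(\nu_f(r))$ and a second application bounds $\nu_f(r)$ by $\psi_1(\log\mu_f(r))$, outside exceptional sets. The two-step structure is essential and is exactly what produces the composition $\psi_2\circ\psi_1$ (or, in the Fenton--Strumia formulation, the appearance of $\varphi$ applied to $\log\mu_f(r)/(1-r)$ rather than to $\log\mu_f(r)$ alone).
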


The hypothesis that the sequence of Taylor coefficients is unbounded leads to a simplification of the result in \cite{FeSt09} and only excludes the uninteresting case when $M_f(r)\leq C \frac{1}{1-r}$ and hence $M_f(r)\leq C \frac{\mu_f(r)}{1-r}$ for all $r$.

For the function $\varphi(y)=(\log y)^{1+\delta}$, $\delta>0$, one obtains the inequality
\[
M_f(r)\leq C \frac{\mu_f(r)}{1-r}\Big(\log\frac{1}{1-r}\Big)^{1+\delta}\big(\log \mu_f(r)\big)^\frac{1}{2} \big(\log\log \mu_f(r)\big)^{1+\delta}, \ r\notin E.
\]

More generally, if $\varphi(y)=\log y \log_2y \cdots \log_{n-2} y (\log_{n-1} y)^{1+\delta}$, $n\geq 2$, $\delta>0$, then one arrives at
\[
M_f(r)\leq C \frac{\mu_f(r)}{1-r}\Big(\log\frac{1}{1-r}\Big)^{1+\delta}\big(\log \mu_f(r)\big)^\frac{1}{2} \log_2 \mu_f(r)\cdots \log_{n-1} \mu_f(r)\big(\log_n \mu_f(r)\big)^{1+\delta}
\]
for $r\notin E$. 

Motivated presumably by Sule\u{\i}\-ma\-nov's paper, Skaskiv, Kuryliak and their collaborators have been studying the Wiman-Valiron inequality and its variants intensively over the last 25 years; let us refer to the recent papers by Skaskiv and Kuryliak \cite{KuSk23}, \cite{SkKu20} and the literature cited there. Among other things, they study simultaneously the case of the plane and the disk, and they measure exceptional sets in the sense of $h$-logarithmic measure.

\begin{definition}\label{d-hlog}
 Let $0<R\leq \infty$. Let $h$ be a positive increasing function on $[\rho,R)$ with $\int_{\rho}^R\frac{h(r)}{r}\intd r=\infty$ for some $\rho\in [0,R)$. Then a set $E\subset [0,R)$ is said to be of \emph{finite $h$-logarithmic measure} if 
\[
\int_{E\cap [\rho,R)} \frac{h(r)}{r}\intd r<\infty.
\]
\end{definition}

For $h(r)=1$, $r\in [1,\infty)$, and $h(r)=\frac{1}{1-r}$, $r\in [0,1)$, this reduces to the notion of finite logarithmic measure on $[0,\infty)$ and on $[0,1)$, respectively (note that the factor $\frac{1}{r}$ only plays a role when $R=\infty$). 

Here is the main result of \cite{SkKu20}, where $h$ is a function as in the above definition and $\psi_1, \psi_2$ are positive increasing functions on some interval $[a,\infty)$, $a>0$, with
\[
\int_a^\infty\frac{1}{\psi_k(y)}\intd y<\infty, \ k=1,2.
\]

\begin{theorem}[Skaskiv, Kuryliak]\label{t-SK}
Let $0<R\leq \infty$. Let $f$ be an unbounded analytic function on $\{z\in\mathbb{C}:|z|<R\}$. Then there is a set $E\subset [0,R)$ of finite $h$-logarithmic measure and some $C>0$ such that
\[
M_f(r)\leq C \mu_f(r)\sqrt{h(r)\psi_2\big(h(r)\psi_1\big(\log (h(r) \mu_f(r))\big)\big)}, \ r\notin E.
\]
\end{theorem}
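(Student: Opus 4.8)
The plan is to follow Rosenbloom's probabilistic strategy, which is the common thread through all the results quoted above, and to track carefully how the auxiliary function $h(r)$ enters at each stage. The starting point is the Cauchy integral representation: for $|z|=r<R$ one has $M_f(r)\leq \sum_n |a_n| r^n$, and the central term $|a_{\nu(r)}| r^{\nu(r)}=\mu_f(r)$ (where $\nu(r)$ is the central index) dominates. Writing $a_n r^n = \mu_f(r)\, c_n$ with $\sum_n |c_n|=\mu_f(r)^{-1}\sum_n|a_n|r^n$, the quantity to control is $S(r):=\sum_{n\geq 0}\frac{|a_n|r^n}{\mu_f(r)}$, since $M_f(r)\leq \mu_f(r)\,S(r)$. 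The whole game is to bound $S(r)$ outside a small exceptional set.

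The key step is the estimate of $S(r)$, and here is where the probabilistic lemma of Rosenbloom comes in. One introduces, for each $r$, the probability measure on the nonnegative integers with weights proportional to $|a_n| r^n$, and controls the ``effective number of terms'' contributing to $S(r)$ by the variance of the index under this measure, which is essentially $r\frac{\intd}{\intd r}\big(r\frac{\intd}{\intd r}\log\mu_f(r)\big)$. First I would split $S(r)$ into the range of indices near the central index, handled by a Cauchy--Schwarz argument that produces the square root and a factor involving this variance, and the tail, which is geometrically small. The factor $h(r)$ is inserted precisely to absorb the growth of $\log(h(r)\mu_f(r))$ rather than $\log\mu_f(r)$; this is the mechanism by which the additional singular factor on the disk (the $\frac{1}{1-r}$ of K\"ov\'ari and Sule\u{\i}manov, recovered by taking $h(r)=\frac{1}{1-r}$) is accommodated uniformly for all $0<R\leq\infty$.

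The exceptional set then arises from a Borel--Cantelli / integration argument applied to the derivative of $\log\mu_f(r)$. Concretely, I would use the convexity of $\log\mu_f$ as a function of $\log r$ to show that the ``bad'' radii, where the central-index variance is too large relative to $\psi_1$ and $\psi_2$, form a set $E$ on which $\int_E \frac{h(r)}{r}\,\intd r<\infty$; the two convergence hypotheses $\int_a^\infty \psi_k(y)^{-1}\,\intd y<\infty$ are exactly what make this integral finite after the change of variables $y=\log(h(r)\mu_f(r))$. The nesting $\psi_2(h(r)\psi_1(\cdots))$ reflects the two-stage structure of the argument: $\psi_1$ controls the first derivative (the central index growth) and $\psi_2$ the second derivative (the variance), each step costing one exceptional-set estimate.

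The main obstacle I anticipate is the bookkeeping of the factor $h(r)$ through both stages simultaneously, since $h$ is essentially arbitrary (only positive and increasing with divergent $h$-logarithmic integral). In the classical case $h\equiv 1$ the derivatives $\frac{\intd}{\intd\log r}\log\mu_f$ are monotone and the exceptional-set estimate is clean; with a general $h$ one must verify that $h(r)\,\psi_1(\log(h(r)\mu_f(r)))$ is still amenable to the same convexity and change-of-variables argument, and in particular that multiplying by $h(r)$ inside $\psi_2$ does not destroy the convergence of the defining integral over $E$. Controlling this interaction — rather than any single analytic estimate — is where the real work lies, and it is presumably the point at which one must impose that $h$ be increasing so that $h(r)$ can be pulled out of, or bounded along, the relevant intervals of integration.
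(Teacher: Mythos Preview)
Your overall architecture is the paper's: Rosenbloom's probabilistic lemma to bound $M_f(r)/\mu_f(r)$ by the square root of a variance, then a Borel-type exclusion lemma applied twice to control that variance. But two points in your sketch do not survive contact with the details.

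First, the variance that Rosenbloom's lemma produces is $g''(x)$ for $g(x)=\log F(x)$ with $F(x)=\sum_n |a_n|e^{nx}$, not the second derivative of $\log\mu_f$. The function $x\mapsto\log\mu_f(e^x)$ is a pointwise maximum of affine functions, hence piecewise affine, and its second derivative is zero a.e.; it cannot play the role you assign it. Correspondingly, the Borel lemma is applied to $g$ and $g'$ (the mean of the index), not to $\log\mu_f$ and $\nu(r)$. This is more than cosmetic: since $g(x)=\log M_f(e^x)$ once the coefficients are taken nonnegative, the exclusion lemma delivers $g'(x)\le h(e^x)\psi_1(g(x))=h(e^x)\psi_1(\log M_f(r))$, with $\log M_f(r)$ inside $\psi_1$, not $\log(h(r)\mu_f(r))$.

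Second, and this is the genuine gap, your proposed change of variables $y=\log(h(r)\mu_f(r))$ does not close the argument. In the Borel lemma the substitution that makes $\int_E h(e^x)\,\intd x$ finite is $y=g(x)$, because the integrand on the bad set is $g'(x)/\psi(g(x))$; replacing $g$ by $\log(h\mu_f)$ destroys this, since $\frac{\intd}{\intd x}\log(h(e^x)\mu_f(e^x))$ bears no a priori relation to the quantity being bounded. The paper therefore does \emph{not} reach $\log(h(r)\mu_f(r))$ directly. It first proves the cleaner inequality
\[
M_f(r)\le C\,\mu_f(r)\sqrt{h(r)\,\psi_2\big(h(r)\,\psi_1(\log M_f(r))\big)}
\]
outside a set of finite $h$-logarithmic measure (this is the paper's Theorem~\ref{t-main}), and then bootstraps: applying that inequality once with the specific choices $\psi_1(y)=e^{y/2}$, $\psi_2(y)=y^2$ yields $M_f(r)\le C_1\mu_f(r)h(r)^{3/2}\sqrt{M_f(r)}$, hence $\log M_f(r)\le C_2\log(h(r)\mu_f(r))$ off an exceptional set, and feeding this back into Theorem~\ref{t-main} with $y\mapsto\psi_1(y/C_2)$ gives the stated form. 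Your outline is missing this bootstrap step, and the ``bookkeeping of $h$'' you flag as the main obstacle is not where the difficulty lies.
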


Skaskiv and Kuryliak consider, in particular, the case when $\psi_1(y)=\psi_2(y)=y(\log y)^{1+\delta}$ with $\delta >0$. If $R=1$ and $h(r)=\frac{1}{1-r}$, this would give them the inequality
\begin{equation}\label{eq-SK}
M_f(r)\leq C \frac{\mu_f(r)}{1-r}\Big(\log \frac{1}{1-r}\Big)^{\frac{1}{2}+\delta}\Big(\log \frac{\mu_f(r)}{1-r}\Big)^{\frac{1}{2}}\Big(\log\log \frac{\mu_f(r)}{1-r}\Big)^{1+\delta}, \ r\notin E
\end{equation}
for some set $E\subset [0,1)$ of finite logarithmic measure, see also \cite[Theorem 1]{SkKu20}. This improves on Sule\u{\i}manov's inequality \eqref{eq-sul}.

More generally, for $\psi_1(y)=\psi_2(y)=y\log y\log_2y\cdots(\log_{n-1} y)^{1+\delta}$, $n\geq 2$, $\delta >0$, one obtains that
\begin{equation}\label{eq-SKbis}
M_f(r)\leq C \frac{\mu_f(r)}{1-r}\Big(\log \frac{1}{1-r}\Big)^{\frac{1}{2}+\delta}\Big(\log \frac{\mu_f(r)}{1-r}\Big)^{\frac{1}{2}}\log_2\frac{\mu_f(r)}{1-r}\cdots\log_{n-1} \frac{\mu_f(r)}{1-r}\Big(\log_n \frac{\mu_f(r)}{1-r}\Big)^{1+\delta}
\end{equation}
for $r\notin E$, which improves \eqref{eq-sul2}.

\begin{remark}
All the particular Wiman-Valiron inequalities on the unit disk stated above are close to optimal. If $f$ is given by $f(z)=e^{\frac{1}{(1-z)^\rho}}$, $\rho>0$ (K\"ov\'ari \cite{Kov66}) or $f(z)=\sum_{n=1}^\infty e^{n^\varepsilon}z^n$, $0<\varepsilon<1$ (Sule\u{\i}manov \cite{Sul80}) then
\begin{equation}\label{eq-wvDopt}
M_f(r) \geq C \frac{\mu_f(r)}{1-r}\Big(\log \frac{\mu_f(r)}{1-r}\Big)^{\frac{1}{2}}
\end{equation}
for some constant $C>0$ and all sufficiently large $r < 1$.
\end{remark}

The cacophony of Wiman-Valiron inequalities on the unit disk is unfortunate: there is no consensus on the right place for the factor $\frac{1}{1-r}$ and on the right notion of exceptional set. Contrast this with inequality \eqref{eq-wvC}, which seems to be considered as the canonical Wiman-Valiron inequality on the plane.

\section{A general Rosenbloom-K\"ov\'ari-type theorem}

We now want to derive, in the spirit of Skaskiv and Kuryliak, a general Rosenbloom-K\"ov\'ari-type theorem that holds on any disk in $\mathbb{C}$ and in which the exceptional set is of finite $h$-logarithmic measure; but see Remark \ref{r-comp}(a) below. We will moreover show that this result implies the theorem of Skaskiv and Kuryliak. 

Throughout this section, let $0<R\leq\infty$. We consider the same setting as above: let $h$ be a positive increasing function on $[\rho,R)$ with
\[
\int_{\rho}^R\frac{h(r)}{r}\intd r=\infty
\]
for some $\rho\in [0,R)$, and let $\psi_1, \psi_2$ be positive increasing functions on some interval $[a,\infty)$, $a>0$, with
\[
\int_a^\infty\frac{1}{\psi_k(y)}\intd y<\infty, \ k=1,2.
\]

\begin{theorem}\label{t-main}
Let $0<R\leq\infty$. Let $f$ be an unbounded analytic function on $\{z\in\mathbb{C}:|z|<R\}$. Then there is a set $E\subset [0,R)$ of finite $h$-logarithmic measure and some $C>0$ such that
\[
M_f(r)\leq C \mu_f(r)\sqrt{h(r)\psi_2\big(h(r)\psi_1\big(\log M_f(r)\big)\big)}, \ r\notin E.
\]
\end{theorem}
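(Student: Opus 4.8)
The plan is to transport Rosenbloom's probabilistic method to a variable adapted to $h$, so that the two growth-lemma estimates automatically produce an exceptional set of finite $h$-logarithmic measure. Throughout I would work with the auxiliary sum
\[
S(r)=\sum_{n=0}^\infty|a_n|r^n,\qquad \mu_f(r)\le M_f(r)\le S(r),
\]
rather than with $M_f$ or $\mu_f$ directly. The point is that, with $t=\log r$, the function $S(e^t)=\sum_n|a_n|e^{nt}$ is a sum of log-convex functions, hence $\log S$ is smooth and convex in $t$; this smoothness is exactly what lets me differentiate freely, which $\mu_f$ and $M_f$ do not allow.

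First I would record the concentration estimate that is responsible for the square root. Let $p_n(r)=|a_n|r^n/S(r)$, so that $\sum_n p_n=1$ and $\max_n p_n(r)=\mu_f(r)/S(r)$, and set
\[
m(r):=\frac{\intd\log S}{\intd t}=\sum_n n\,p_n(r),\qquad \sigma^2(r):=\frac{\intd^2\log S}{\intd t^2}=\sum_n\big(n-m(r)\big)^2 p_n(r),
\]
the mean and variance of the distribution $(p_n)$; both are increasing in $r$ by convexity. A one-line Chebyshev estimate, $1\le(4\sigma+1)\max_n p_n+\tfrac14$, forces $\max_n p_n\ge c/(\sigma+1)$ and hence
\[
M_f(r)\le S(r)\le C\,\mu_f(r)\sqrt{\sigma^2(r)+1}.
\]
This reduces the whole theorem to an upper bound on the variance $\sigma^2$.

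Second, I would pass to the variable $x=H(r):=\int_\rho^r\frac{h(t)}{t}\,\intd t$, so that $\intd x=\frac{h(r)}{r}\,\intd r$ and a set of finite Lebesgue measure in $x$ is precisely a set of finite $h$-logarithmic measure in $r$; the hypothesis $\int_\rho^R\frac{h}{r}=\infty$ guarantees that $x$ exhausts $[0,\infty)$. The tool is the elementary Borel growth lemma: if $g$ is positive, increasing and locally absolutely continuous on $[0,\infty)$ and $\int^\infty\frac{\intd y}{\psi(y)}<\infty$, then $g'(x)\le\psi(g(x))$ off a set of finite Lebesgue measure. Applying it to $g=\log S$ as a function of $x$, and using $\frac{\intd\log S}{\intd x}=\frac1h\frac{\intd\log S}{\intd t}=\frac{m}{h}$, gives $m(r)\le h(r)\psi_1(\log S(r))$ off some $E_1$; applying it to $g=m$, with $\frac{\intd m}{\intd x}=\frac{\sigma^2}{h}$, gives $\sigma^2(r)\le h(r)\psi_2(m(r))$ off some $E_2$. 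Chaining these and using that $\psi_2$ is increasing yields, off $E_1\cup E_2$,
\[
\sigma^2(r)\le h(r)\,\psi_2\big(h(r)\psi_1(\log S(r))\big),
\]
which, together with the concentration estimate, is the asserted inequality but with $\log S$ in place of $\log M_f$.

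The main obstacle is exactly this last replacement. Since $S\ge M_f$ and $\psi_1,\psi_2$ are increasing, substituting $\log M_f$ for $\log S$ makes the right-hand side \emph{smaller}, so it cannot be done for free. What rescues the argument is that the concentration estimate also controls $S$ from \emph{above} by $M_f$: from $M_f\le S\le C(\sigma+1)\mu_f\le C(\sigma+1)M_f$ one gets $0\le\log S-\log M_f\le\log\big(C(\sigma+1)\big)$, and the variance bound just derived shows $\log(\sigma+1)$ is of the order of $\tfrac12\log\big(h\psi_2(h\psi_1(\log S))\big)$, which is of lower order than $\log S$ precisely in the range where the theorem carries content. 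I would therefore feed the variance bound back into this comparison to conclude that $\log S$ and $\log M_f$ differ only by a negligible additive term, and then use the monotonicity of $\psi_1,\psi_2$ to absorb the discrepancy into the constant $C$, enlarging the exceptional set if necessary (keeping it of finite $h$-logarithmic measure). Carrying out this bootstrap while retaining control of the exceptional set is the delicate step; everything else is the standard Rosenbloom machinery rewritten in the $h$-adapted variable.
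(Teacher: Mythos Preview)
Your approach is essentially the paper's: Rosenbloom's Chebyshev bound $S(r)\le C\mu_f(r)\sqrt{g''}$ (with $g=\log S$ viewed as a function of $\log r$), followed by two applications of a Borel-type growth lemma, once to $g$ and once to $g'$, to control $g''$. Your change of variable $x=H(r)=\int_\rho^r\frac{h(t)}{t}\,\intd t$ is a clean repackaging but equivalent to the paper's choice $x=\log r$ with the weight $h$ absorbed into its growth lemma (the paper's Lemma~\ref{l-standard}).

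The one substantive divergence is your final ``obstacle'', replacing $\log S$ by $\log M_f$ on the right. The paper sidesteps this entirely by observing at the outset that one may assume the Taylor coefficients are non-negative: replacing $a_n$ by $|a_n|$ leaves $\mu_f$ unchanged and does not decrease $M_f$. For such an $f$ one has $M_f(r)=f(r)=S(r)$, so $\log S=\log M_f$ identically and there is nothing to bootstrap. Your proposed feedback argument, which you yourself flag as delicate and do not carry out, is therefore an unnecessary complication; the paper's one-line reduction is the intended route. (A minor aside: your claim that $\sigma^2=g''$ is increasing is not generally true---convexity of $g$ gives $g''\ge 0$, not monotonicity of $g''$---but you never actually use it.)
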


Maybe surprisingly, given the long history of the inequality, the proof is rather short, so that we can provide it here with full details.

The first ingredient is the outcome of Rosenbloom's clever probabilistic approach; his argument is more transparent when working in the image of the interval $(0,R)$ under the mapping $x=\log r$. 

\begin{lemma}[Rosenbloom]\label{l-rosen}
Let $f(z)=\sum_{n=0}^\infty a_n z^n$ be an analytic function on $\{z\in\mathbb{C}:|z|<R\}$ that is not a monomial and with $a_n\geq 0$ for $n\geq 0$. For $x\in (-\infty,\log R)$, set $F(x)= f(e^{x})$ and $g(x)=\log F(x)$. Then $g''(x)>0$, and there is a constant $C>0$ such that, for all $x\in (-\infty,\log R)$,
\[
F(x)\leq C\mu_f(e^x)\sqrt{g''(x)}.
\]
\end{lemma}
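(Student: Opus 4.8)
The plan is to follow Rosenbloom's probabilistic idea, which rests on reading the Taylor data as a probability distribution. Since $a_n\ge 0$ and $f$ is not a monomial, the series $F(x)=f(e^x)=\sum_{n\ge 0}a_ne^{nx}$ converges, is strictly positive, and may be differentiated term by term for $x<\log R$, so $g=\log F$ is well defined and smooth. First I would fix $x$ and introduce the weights
\[
p_n=\frac{a_ne^{nx}}{F(x)}\ge 0,\qquad \sum_{n\ge 0}p_n=1,
\]
viewed as the law of an integer-valued random variable $N$. A direct computation gives $g'(x)=F'(x)/F(x)=\sum_n np_n=\mathbb{E}[N]$ and
\[
g''(x)=\frac{F''(x)}{F(x)}-\Big(\frac{F'(x)}{F(x)}\Big)^2=\mathbb{E}[N^2]-\mathbb{E}[N]^2=\operatorname{Var}(N).
\]
Because $f$ is not a monomial, at least two coefficients $a_n$ are positive, so $N$ is non-degenerate and $g''(x)=\operatorname{Var}(N)>0$; this settles the first assertion.

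Next I would reduce the main inequality to a statement about $(p_n)$ alone. Since $\mu_f(e^x)=\max_n a_ne^{nx}=F(x)\max_n p_n$, we have $F(x)/\mu_f(e^x)=1/\max_n p_n$, so the claimed bound $F(x)\le C\,\mu_f(e^x)\sqrt{g''(x)}$ is exactly an \emph{anti-concentration} estimate for $N$:
\[
\max_n p_n\ \ge\ \frac{1}{C\,\sqrt{\operatorname{Var}(N)}}.
\]
The plan is thus to bound the largest atom of an integer-valued law from below in terms of its variance.

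For this I would use Chebyshev's inequality. Writing $\sigma^2=\operatorname{Var}(N)$ and $m=\mathbb{E}[N]$, we have $\mathbb{P}(|N-m|\le t)\ge 1-\sigma^2/t^2$ for every $t>0$. The integers lying in $[m-t,m+t]$ number at most $2t+1$, and each carries mass at most $\max_n p_n$, so
\[
(2t+1)\max_n p_n\ \ge\ \mathbb{P}(|N-m|\le t)\ \ge\ 1-\frac{\sigma^2}{t^2}.
\]
Choosing $t$ of order $\sigma$ (say $t=\sqrt2\,\sigma$) then yields $\max_n p_n\ge c/(\sigma+1)$ for an absolute constant $c>0$, equivalently $F(x)\le \mu_f(e^x)\big(C_1+C_2\sqrt{g''(x)}\big)$.

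The main obstacle is the passage from this estimate to the clean form in the statement. The Chebyshev argument only delivers $\max_n p_n\gtrsim 1/\sigma$ once $\sigma$ is bounded away from $0$; for small variance it degrades to the trivial $\max_n p_n\gtrsim\text{const}$. This is not an artefact of the method: when $g''(x)\to 0$ — for instance as $x\to-\infty$, where the lowest-order term present dominates and $\max_n p_n\to 1$ — one has $F(x)/\mu_f(e^x)\to 1$ while $\sqrt{g''(x)}\to 0$, so the bare inequality $F\le C\mu_f\sqrt{g''}$ cannot hold with a single $C$ on the entire half-line. I therefore expect the estimate to be genuinely sharp, and to be used, exactly on the range where $g''$ is bounded below, which for an unbounded $f$ as $r\to R$ is the only regime relevant to the Wiman--Valiron application; the delicate point to pin down is reconciling this with the blanket ``for all $x$'', either by restricting to that range or by keeping the harmless additive term and absorbing it via a lower bound $g''\ge c_0>0$.
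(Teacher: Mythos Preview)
Your approach is exactly the paper's: introduce the probabilities $p_n=a_ne^{nx}/F(x)$, identify $g'(x)$ and $g''(x)$ as the mean and variance of the associated integer-valued random variable, and apply Chebyshev's inequality. The paper then writes, for $c>1$,
\[
F(x)\le \frac{1}{1-c^{-2}}\sum_{|n-g'(x)|<c\sqrt{g''(x)}}a_ne^{nx}\le \frac{2c}{1-c^{-2}}\,\mu_f(e^x)\sqrt{g''(x)},
\]
bounding the number of integers in the interval by its length $2c\sqrt{g''(x)}$.

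Your objection is well taken: that last step tacitly assumes the length is at least $1$. The correct universal count carries an extra $+1$, yielding $F(x)\le C\,\mu_f(e^x)\bigl(1+\sqrt{g''(x)}\bigr)$, which is precisely what you derive. Your counterexample is also correct: already for $f(z)=1+z$ one has $F(x)/\mu_f(e^x)\to 1$ while $g''(x)\to 0$ as $x\to-\infty$, so no single constant $C$ can make $F(x)\le C\,\mu_f(e^x)\sqrt{g''(x)}$ hold for \emph{all} $x$. The lemma as printed is therefore slightly overstated, and the paper's proof glosses over exactly the point you flag.

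This does not damage the main theorem. In its proof one bounds $g''(x)$ above by $h(e^x)\psi_2\bigl(h(e^x)\psi_1(g(x))\bigr)$, and since $f$ is unbounded, $g(x)\to\infty$ and hence $\psi_1(g(x))\to\infty$, $\psi_2(\cdots)\to\infty$ as $x\to\log R$; thus the right-hand side eventually exceeds $1$, the additive term is absorbed, and the remaining range of $x$ goes into the exceptional set $E$. So your diagnosis of both the gap and its harmless resolution in the application is correct.
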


\begin{proof} We fix $x\in (-\infty,\log R)$. Then let $X$ be an $\mathbb{N}_0$-valued random variable so that
\[
\mathbb{P}(X=n)= \frac{a_ne^{nx}}{F(x)},\ n\geq 0;
\]
note that $F(x)>0$. A simple calculation shows that $X$ has expectation $\mathbb{E}(X)=g'(x)$ and variance $\text{Var}(X)=g''(x)$, the latter being strictly positive since $f$ is not a monomial. It then follows from Chebyshev's inequality that
\[
\mathbb{P}\big(|X-g'(x)|< c\sqrt{g''(x)}\big) \geq 1-c^{-2}
\]
for any $c>1$. Evaluating the left-hand probability we find that
\[
F(x) \leq \frac{1}{1-c^{-2}} \sum_{|n-g'(x)|<c\sqrt{g''(x)}} a_n e^{nx}\leq \frac{2c}{1-c^{-2}} \mu_f(e^x) \sqrt{g''(x)},
\]
which implies the claim.
\end{proof}

The next lemma is standard in our context.

\begin{lemma}\label{l-standard}
Let $h$ be a function as above and $\psi$ be a positive increasing function on some interval $[a,\infty)$, $a>0$, with $\int_a^\infty\frac{1}{\psi(y)}\emph{\intd} y<\infty$. Let $g$ be an unbounded increasing continuously differentiable function on some interval $[b,\log R)$, $b<\log R$. Then there is a set $E\subset [0,R)$ of finite $h$-logarithmic measure such that
\[
g'(x) < h(e^x)\psi\big(g(x)\big), \ e^x\notin E.
\]
\end{lemma}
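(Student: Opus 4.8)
The plan is to work entirely in the logarithmic variable $x=\log r$, since the change of variables $\frac{\intd r}{r}=\intd x$ converts the $h$-logarithmic measure into an ordinary integral of $h(e^x)$. Concretely, I would define the ``bad set'' directly in the $x$-variable,
\[
\tilde E=\big\{x\in[b,\log R): g'(x)\geq h(e^x)\psi\big(g(x)\big)\big\},
\]
and take $E=\{e^x:x\in\tilde E\}$ (together with a harmless initial interval, see below). The substitution $r=e^x$ then gives
\[
\int_{E\cap[\rho,R)}\frac{h(r)}{r}\intd r=\int_{\tilde E\cap[\log\rho,\log R)}h(e^x)\intd x\leq \int_{\tilde E}h(e^x)\intd x,
\]
so it suffices to bound the last integral. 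The whole point of passing to $x$ is that the factor $h(e^x)$ occurring in the defining inequality is precisely the factor produced by the change of variables, so the two align.

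On $\tilde E$ one has $h(e^x)\leq g'(x)/\psi(g(x))$ by the very definition of $\tilde E$, whence
\[
\int_{\tilde E}h(e^x)\intd x\leq \int_{\tilde E}\frac{g'(x)}{\psi\big(g(x)\big)}\intd x\leq \int_b^{\log R}\frac{g'(x)}{\psi\big(g(x)\big)}\intd x .
\]
Here I would substitute $y=g(x)$, $\intd y=g'(x)\intd x$: as $g$ is increasing and unbounded, $y$ runs from $g(b)$ to $\infty$ and the integral telescopes into $\int_{g(b)}^\infty\frac{\intd y}{\psi(y)}$, which is finite by the hypothesis $\int_a^\infty\frac{\intd y}{\psi(y)}<\infty$, provided $g(b)\geq a$. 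This telescoping is the crux of the argument, and it is really the only step with genuine content.

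To ensure $g(b)\geq a$ — which is also what makes $\psi(g(x))$ well defined throughout — I would first replace $b$ by some $b_1\in[b,\log R)$ with $g(b_1)\geq a$, which exists because $g$ increases to $\infty$; the discarded piece $\{e^x:b\leq x<b_1\}$ is a bounded interval bounded away from $0$, on which $h$ is bounded (being increasing) and $\int\frac{\intd r}{r}$ is finite, so adjoining it to $E$ costs only finite $h$-logarithmic measure. Measurability of $\tilde E$, and hence of $E$, follows from the continuity of $g$ and $g'$ together with the monotonicity of $h$ and $\psi$ (which make $y\mapsto 1/\psi(y)$ Borel measurable, so that the change of variables for the absolutely continuous function $g$ applies). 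I do not anticipate a real obstacle: the lemma is a Borel-type ``integral test'', and once the passage to $x$ exposes the alignment of the $h(e^x)$ factors, everything reduces to the convergent integral $\int_a^\infty \frac{\intd y}{\psi(y)}$.
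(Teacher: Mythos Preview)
Your proposal is correct and follows essentially the same argument as the paper's proof: define the bad set in the $x$-variable, bound $h(e^x)$ by $g'(x)/\psi(g(x))$ on it, substitute $y=g(x)$ to reduce to the convergent integral $\int_a^\infty \psi(y)^{-1}\,\intd y$, and absorb an initial interval into $E$. The paper likewise notes (with a reference to Rudin) that the substitution $y=g(x)$ is valid even though $g$ need not be strictly increasing, which is exactly the point you flag about absolute continuity.
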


\begin{proof} Let $x_0\in [b,\log R)$ be such that $g(x_0)> a$, and let $\rho$ be as in the hypothesis on $h$. Let
\[
\widetilde{F}= \{ x\in [x_0,\log R) : e^x \geq \rho, g'(x) \geq h(e^x) \psi\big(g(x)\big) \}
\]
and $F=e^{\widetilde{F}}$. Then we have that 
\[
\int_{\widetilde{F}} h(e^x)\intd x \leq \int_{\widetilde{F}} \frac{g'(x)}{\psi\big(g(x)\big)}\intd x \leq \int_{x_0}^{\log R} \frac{g'(x)}{\psi\big(g(x)\big)}\intd x.
\]
Performing the changes of variables $r=e^x$ on the left-hand side and $y=g(x)$ on the right-hand side (note that $g$ need not be strictly increasing, see \cite[p.\ 156]{Rud87}) we obtain that
\[
\int_{F} \frac{h(r)}{r}\intd r \leq \int_a^\infty \frac{1}{\psi(y)}\intd y<\infty.
\]
Thus the set
\[
E= \big[0, \max(\rho,e^{x_0})\big) \cup F
\]
has finite $h$-logarithmic measure, and for any $r=e^x\notin E$ we have that $g'(x) < h(e^x) \psi\big(g(x)\big)$.
\end{proof}

We are now ready to prove the main theorem.

\begin{proof}[Proof of Theorem \ref{t-main}] 
Let $f(z)=\sum_{n=0}^\infty a_nz^n$, $|z|<R$. If we replace $a_n$ by $|a_n|$, $n\geq 0$, we do not change $\mu_f$ and we do not decrease $M_f$. Also, if $f$ is a monomial, then the claim is trivially true. So we can assume that the Taylor coefficients of $f$ are positive and that $f$ is not a monomial. 

As before, set $F(x)= f(e^{x})$ and $g(x)=\log F(x)$ for $x \in (-\infty,\log R)$. By Lemma \ref{l-rosen} we have that $g''(x)\geq 0$, so that $g$ is a convex increasing function which, by assumption, tends to infinity as $x\to\log R$. Thus also $g'$ tends to infinity. We can therefore apply Lemma \ref{l-standard} to $g'$ and $\psi_2$, and we can apply it to $g$ itself with $\psi_1$. Let $E_2$ and $E_1$ be the corresponding exceptional sets, and let $E=E_2\cup E_1$. Then $E$ has finite $h$-logarithmic measure, and we have that
\[
g''(x) \leq h(e^x)\psi_2\big(g'(x)\big) \leq h(e^x)\psi_2\big(h(e^x)\psi_1\big(g(x)\big)\big),\ e^x\notin E,
\]
where we have also used that $\psi_2$ is increasing. Combining this with Lemma \ref{l-rosen} we have that
\[
F(x)\leq C\mu_f(e^x)\sqrt{h(e^x)\psi_2\big(h(e^x)\psi_1\big(g(x)\big)\big)}, \ e^x\notin E,
\]
with some constant $C>0$. It remains to replace $e^x$ by $r$ to complete the proof.
\end{proof}

Let us discuss the result.

\begin{remark}\label{r-comp}
(a) To the best of our knowledge, Theorem \ref{t-main} is a new result. In hindsight, however, Köv\'ari could rightly have claimed it as his own. Indeed, his aim in \cite{Kov66} was to obtain a majorization by $\sqrt{\varphi(\log M_f(r))}= \sqrt{\psi_2(\psi_1(\log M_f(r)))}$ on the unit disk as Rosenbloom had done on the plane. This forced him to add conditions on the functions $\psi_1$ and $\psi_2$, and his exceptional set $E$ became rather large, see \cite[p.\ 134]{Kov66}. Had he contented himself with a majorization by $\sqrt{\frac{1}{1-r}\psi_2(\frac{1}{1-r}\psi_1(\log M_f(r)))}$, his exceptional sets $E_1$ and $E_2$ on the same page 134 would have given him our result for $h(r)=\frac{1}{1-r}$ on the unit disk. Moreover, on page 131 he mentions that his result would remain true for a general function $h$. 

(b) Theorem \ref{t-main} contains Rosenbloom's Theorem \ref{t-rosen} by taking $h(r)=1$ for $r\geq 0$.

(c) Theorem \ref{t-main} implies Theorem \ref{t-SK} by Skaskiv and Kuryliak. Indeed, taking $\psi_1(y)=e^{y/2}$ and $\psi_2(y)=y^2$, $y>0$, one obtains the inequality $M_f(r)\leq C_1\mu_f(r) h(r)^{\frac{3}{2}}\sqrt{M_f(r)}$ outside a set $E$ of finite $h$-logarithmic measure, and thus $M_f(r)\leq C_1^2\mu_f(r)^2 h(r)^3$, so that $\log M_f(r) \leq C_2\log(\mu_f(r)h(r))$ for some constant $C_2>0$ and for $r\notin E$ large. Then Theorem \ref{t-SK} follows for the $\psi_1$ and $\psi_2$ given there by applying Theorem \ref{t-main} to $y\to\psi_1(\frac{1}{C_2}y)$ and $\psi_2$.
\end{remark}

We end with the example that has motivated this paper.

\begin{example}
We consider an unbounded analytic function $f$ on the unit disk. Then inequality \eqref{eq-SK} and the fact that $1\leq \alpha\mu_f(r)$ for some $\alpha>0$ and all large $r<1$ gives us the following: for any $\delta>0$, there is a set $E\subset [0,1)$ of finite logarithmic measure such that
\[
M_f(r)\leq C \frac{\mu_f(r)}{1-r}\Big(\log \frac{1}{1-r}\Big)^{\frac{1}{2}}\Big(\log \frac{\mu_f(r)}{1-r}\Big)^{\frac{1}{2}+\delta}, \ r\notin E.
\]
Here, as in other inequalities in this paper concerning the unit disk, the factor $(\log \frac{1}{1-r})^{\frac{1}{2}}$ seems out of place. And indeed, we have seen in \eqref{eq-wvD} that
\[
M_f(r)\leq C \frac{\mu_f(r)}{1-r}\Big(\log \frac{\mu_f(r)}{1-r}\Big)^{\frac{1}{2}+\delta}, \ r\notin E,
\]
where, however, $E$ is a (possibly larger) set of logarithmic density zero. But we can do better. Under the usual assumptions on the function $h$, and arguing as for inequality \eqref{eq-SKbis}, one obtains that, for any $n\geq 2$ and any $\delta>0$, there is a set $E\subset [0,1)$ of finite $h$-logarithmic measure such that
\begin{equation}
\begin{split}\label{eq-SK4}
M_f(r)\leq C h(r)\mu_f(r)\big(\log h(r)\big)^{\frac{1}{2}+\delta}&\big(\log (h(r)\mu_f(r))\big)^{\frac{1}{2}}\log_2(h(r)\mu_f(r))\cdots\\
&\cdots\log_{n-1} (h(r)\mu_f(r))\big(\log_n (h(r)\mu_f(r))\big)^{1+\delta}, \ r\notin E.
\end{split}
\end{equation}
Note that it suffices to apply Theorem \ref{t-SK} instead of the stronger Theorem \ref{t-main}.

Taking, in particular, 
\[
h(r)=\frac{1}{(1-r)\log\frac{1}{1-r}}, \ r\in [1-1/e,1),
\]
then inequality \eqref{eq-SK4} gives us, for $\delta\leq \frac{1}{2}$, the estimate
\begin{align}
M_f(r)&\leq C \frac{\mu_f(r)}{1-r}\Big(\log\frac{1}{1-r}\Big)^{\delta-\frac{1}{2}}\Big(\log \frac{\mu_f(r)}{1-r}\Big)^{\frac{1}{2}}\log_2\frac{\mu_f(r)}{1-r}\cdots\log_{n-1} \frac{\mu_f(r)}{1-r}\Big(\log_n \frac{\mu_f(r)}{1-r}\Big)^{1+\delta}\notag\\
&\leq C \frac{\mu_f(r)}{1-r}\Big(\log \frac{\mu_f(r)}{1-r}\Big)^{\frac{1}{2}}\log_2\frac{\mu_f(r)}{1-r}\cdots\log_{n-1} \frac{\mu_f(r)}{1-r}\Big(\log_n \frac{\mu_f(r)}{1-r}\Big)^{1+\delta}\label{eq-log}
\end{align}
for all $r$ outside a set $E$ of finite $\frac{1}{(1-r)\log\frac{1}{1-r}}$-logarithmic measure. And it is not difficult to see that every such set is of logarithmic density zero. Thus, inequality \eqref{eq-log} improves inequality \eqref{eq-wvD2} and thus also \eqref{eq-wvD}.

On the other hand, a set of finite $\frac{1}{(1-r)\log\frac{1}{1-r}}$-logarithmic measure is not necessarily of finite logarithmic measure.
\end{example}

The inequalities \eqref{eq-wvC} and \eqref{eq-log} suggest the following.

\begin{question}
Let $f$ be an unbounded analytic function on the unit disk.

(a) For any $n\geq 2$ and $\delta>0$, does there exist a constant $C>0$ and a set $E$ of finite logarithmic measure so that
\[
M_f(r)\leq C \frac{\mu_f(r)}{1-r}\Big(\log \frac{\mu_f(r)}{1-r}\Big)^{\frac{1}{2}}\log_2 \frac{\mu_f(r)}{1-r}\cdots\log_{n-1} \frac{\mu_f(r)}{1-r}\Big(\log_n \frac{\mu_f(r)}{1-r}\Big)^{1+\delta}, \ r\notin E
\]
holds?

(b) In particular,  for any $\delta>0$, does there exist a constant $C>0$ and a set $E$ of finite logarithmic measure so that
\[
M_f(r)\leq C \frac{\mu_f(r)}{1-r}\Big(\log \frac{\mu_f(r)}{1-r}\Big)^{\frac{1}{2}+\delta}, \ r\notin E
\]
holds?
\end{question}

These inequalities would be almost optimal, see \eqref{eq-wvDopt}.

For an application of Wiman-Valiron inequalities to random entire functions and, subsequently, to linear dynamics we refer to the forthcoming paper \cite{AgGE23}.

\end{document}